\newcounter{taggedeq}
\pretocmd{\equation}{\stepcounter{taggedeq}}{}{}
\newtheorem{theorem}{Theorem}
\newtheorem{proposition}[theorem]{Proposition}
\newtheorem{lemma}[theorem]{Lemma}
\newtheorem{corollary}[theorem]{Corollary}
\newcommand{\R}{{\mathbb R}}
\newcommand{\N}{{\mathbb N}}
\newcommand{\E}{{\mathscr E}}
\newcommand{\I}{{\mathscr I}}
\newcommand{\J}{{\mathscr J}}
\newcommand{\K}{{\mathscr K}}
\renewcommand{\L}{{\mathscr L}}
\newcommand{\be}[1]{\begin{equation}\label{#1}}
\newcommand{\ee}{\end{equation}}
\renewcommand{\(}{\left(}
\renewcommand{\)}{\right)}
\newcommand{\irdy}[1]{\int_{\R^d}{#1}\,dy}
\newcommand{\irdg}[1]{\int_{\R^d}{#1}\,d\gamma}
\newcommand{\nrmG}[2]{\|#1\|_{\mathrm L^{#2}(\R^d,d\gamma)}}
\newcommand{\Cst}\eta
\newcommand{\CGS}{\mathrm C}
\newcommand{\taus}{s}
\newcommand{\msc}[1]{\href{https://zbmath.org/classification/?q=cc:#1}{#1}}
\begin{document}
\begin{frontmatter}

\title{Stability for the logarithmic Sobolev inequality}

\author[IST]{Giovanni Brigati}
\ead{giovanni.brigati@ist.ac.at}
\author[CEREMADE]{Jean Dolbeault\corref{cor}}
\ead{dolbeaul@ceremade.dauphine.fr}
\author[LJLL]{Nikita Simonov}
\ead{nikita.simonov@sorbonne-universite.fr}
\cortext[cor]{Corresponding author}
\address[IST]{Institute of Science and Technology Austria (ISTA), Am Campus 1, 3400 Klosterneuburg, Austria}
\address[CEREMADE]{CEREMADE (CNRS UMR n$^\circ$~7534), PSL University, Universit\'e Paris-Dauphine,\newline Place de Lattre de Tassigny, 75775 Paris 16, France}
\address[LJLL]{LJLL (CNRS UMR n$^\circ$~7598), Sorbonne Universit\'e, 4 place Jussieu, 75005 Paris, France}

\begin{abstract} This paper is devoted to stability results for the Gaussian logarithmic Sobolev inequality, with explicit stability constants. \end{abstract}

\begin{keyword}
logarithmic Sobolev inequality, stability, log-concavity, heat flow, entropy, carr\'e du champ, Poincar\'e inequality.
\MSC[2020] Primary: \msc{39B62}; Secondary: \msc{47J20}, \msc{49J40}, \msc{35A23}, \msc{35K85}.

\end{keyword}
\end{frontmatter}

\section{Introduction}\label{Sec:Intro}

\subsection{Main result}
Let $d \geq 1,$ and let us consider the \emph{Gaussian logarithmic Sobolev inequality}
\be{LSIg}\tag{LSI}
\nrmG{\nabla u}2^2\ge\frac12\irdg{|u|^2\,\log\(\frac{|u|^2}{\nrmG u2^2}\)}\quad\forall\,u\in\mathrm H^1(\R^d,d\gamma)
\ee
where $d\gamma=\gamma(x)\,dx$ is the normalized Gaussian probability measure with density
\[
\gamma(x)=(2\,\pi)^{-\frac d2}\,e^{-\frac12\,|x|^2}\quad\forall\,x\in\R^d\,.
\]
According to~\cite{MR1132315,bakry1985diffusions}, equality in~\eqref{LSIg} is achieved by any function in the manifold
\[
\mathscr M:=\big\{w_{a,c}\,:\,(a,c)\in\R^d\times\R\big\}\quad\mbox{where}\quad w_{a,c}(x)=c\,e^{a\cdot x}\quad\forall\,x\in\R^d
\]
and only by functions in $\mathscr M$.

The issue of \emph{stability} in functional inequalities is either (a) to estimate the distance to~$\mathscr M$ by the deficit in~\eqref{LSIg}, \emph{i.e.}, by the difference of the two sides in the inequality, or (b) to obtain an \emph{improved inequality}, \emph{i.e.}, an improved constant in the inequality, under appropriate conditions. In fact, (b) amounts to establish (a) but for a restricted class of functions. More details will be given later. Our main result is in the spirit of (b). By two-homogeneity of~\eqref{LSIg}, we may consider functions in
\[
\mathscr H:=\left\{u\in\mathrm H^1(\R^d,d\gamma)\,:\,\nrmG u2=1\right\}
\]
without loss of generality and state an \emph{improved~\eqref{LSIg} inequality} as follows.
\begin{theorem}\label{Thm:Main} Let $d\ge1$. For any $\varepsilon>0$ and $\CGS>0$, there is an explicit $\Cst\in(0,1)$ such that
\[
\(1-\Cst\)\nrmG{\nabla u}2^2\ge\frac12\irdg{|u|^2\,\log|u|^2}
\]
for any $u\in\mathcal H$ such that
\be{Moment}
\irdg{x\,|u|^2}=0
\ee
and
\be{gautail}
\iint_{\R^d\times\R^d}|u(x)|^2\,|u(y)|^2\,e^{\,\varepsilon\,|x-y|^2} d\gamma(x)\,d\gamma(y)\le\CGS\,.
\ee
\end{theorem}
\noindent The constant $\Cst$ depends only on $\CGS$ and $\epsilon$, but not directly on the dimension $d$. This is consistent with~\cite{chen2021dimension}. In the special case of compactly supported functions, $\Cst$ can be estimated explicitly as a function of the size of the support. 

To the best of our knowledge, it is the first time such that Condition~\eqref{gautail} appears in the study of stability of~\eqref{LSIg}. Simpler conditions can also be given: for instance,~\eqref{gautail} holds true for any function $u\in\mathcal H$ such that
\be{si}
\irdg{e^{2\,\varepsilon\,|x|^2}\,|u|^2}\le\sqrt{\CGS}\,,
\ee
as a consequence of the estimate $e^{\varepsilon\,|x-y|^2}\le e^{2\,\varepsilon\,|x|^2}\,e^{2\,\varepsilon\,|y|^2}$.

\medskip Our strategy goes as follows. Under Condition~\eqref{gautail}, a density $h(t=0,\cdot)=|u|^2$ evolved by the \emph{Ornstein--Uhlenbeck} flow on $\R^d$,
\be{OU}
\frac{\partial h}{\partial t}=\L h\,,\quad(t,x)\in\R^+\times\R^d\,,
\ee
where $\L h:=\Delta h-x\cdot\nabla h$ denotes the \emph{Ornstein--Uhlenbeck} operator, is such that the measure $d\mu_T=h(T,\cdot)\,d\gamma$ satisfies a Poincar\'e inequality after some finite time $T\ge0$, as a consequence of the results of H.-B.~Chen, S.~Chewi, and J.~Niles-Weed in~\cite{chen2021dimension}.  According to M.~Fathi, E.~Indrei, and M.~Ledoux in~\cite{Fathi_2016}, this measure also satisfies an improved version of~\eqref{LSIg}. Using a \emph{backward in time} argument based on the \emph{carr\'e du champ} method, we deduce an improved~\eqref{LSIg} inequality for the initial density $|u|^2$ from the improved~\eqref{LSIg} inequality for the measure $d\mu_T$. This completes the sketch of the proof of Theorem~\ref{Thm:Main}. 

\medskip This paper is organized as follows. The remainder of the introduction is dedicated to a partial review of the literature. In Section~\ref{Sec:Prelim}, we discuss the distinction between improved inequalities and stability results, collect various observations, give stability results for which we do not claim much originality (but with new proofs and simple, explicit constants) and state the key \emph{backward in time} argument. Section~\ref{Sec:Proof} is devoted to the proof of Theorem~\ref{Thm:Main}. In Section~\ref{Sec:More}, we state an additional stability result for log-concave functions and establish a dimension-free estimate for compactly supported functions that arises as a consequence of Theorem~\ref{Thm:Main}.

\subsection{A partial review of the literature}\label{Sec:review}

In 1975, the \emph{Gaussian logarithmic Sobolev inequality}~\eqref{LSIg} was shown in~\cite{Gross75} by L.~Gross to be equivalent to the hypercontractivity of the Ornstein–Uhlenbeck semigroup. A scale invariant (but dimension-dependent) version of the Euclidean form of the inequality appeared in~\cite[Theorem~2]{MR479373}, which was already known from~\cite[Inequality~(2.3)]{MR0109101} in dimension $d=1$. By using a duality argument,~\eqref{LSIg} is also related to a Keller type estimate, see~\cite{Federbush}. The reader interested in further historical details is invited to refer to~\cite[Section~1.3.2]{Villani2008} and also to~\cite{zbMATH01503413,MR3034582,MR3339594} for further background references in information theory. More references can also be found in~\cite{Dolbeault_2015,Fathi_2016}. The optimality case in the inequality has been characterized in~\cite{MR1132315}, but can also be deduced from~\cite{bakry1985diffusions}. The logarithmic Sobolev inequality can be seen as a limit case of a family of Gagliardo-Nirenberg-Sobolev inequalities, as observed in~\cite{MR1940370} in the Euclidean setting, or as a large dimension limit of the Sobolev inequality according to~\cite{MR1164616} (see also~\cite{https://doi.org/10.48550/arxiv.2302.03926} for detailed computations and further references). For books on~\eqref{LSIg}, we refer to~\cite{ane2000inegalites,Guionnet2002,MR2352327,MR3155209}.

In a classical result on stability in functional inequalities, G.~Bianchi and H.~Egnell proved in~\cite{MR1124290} that the deficit in the Sobolev inequality measures the $\dot{\mathrm H}^1(\R^d,dx)$ distance to the manifold of the optimisers. The estimate has been made constructive in~\cite{https://doi.org/10.48550/arxiv.2209.08651}, where a new $\mathrm L^2(\R^d,d\gamma)$ stability result for the logarithmic Sobolev inequality is also established (also see~\cite{Indrei_2023,MR4305006} for further negative and positive results in other norms, for instance in strong norms like $\mathrm H^1(\R^d,d\gamma)$). To our knowledge, the first result of stability for the logarithmic Sobolev inequality is a reinforcement of the inequality due to E.~Carlen in~\cite{MR1132315} where he introduces an additional term involving the Wiener transform. An improved~\eqref{LSIg} inequality appeared in~\cite{Fathi_2016} based on a Mehler formula for the Ornstein--Uhlenbeck semigroup, which gives deficit estimates in various distances for functions inducing a Poin\-car\'e inequality. It is a result in the spirit of (b) and~\cite{Fathi_2016} is crucial for our proof of Theorem~\ref{Thm:Main}. Under the condition $\nrmG{x\,u}2=\sqrt d$, a stability result measured by a relative Fisher information is also given in~\cite{Dolbeault_2015}, on the basis of simple scaling properties of the Euclidean form of the logarithmic Sobolev inequality. For sequential stability results in strong norms, we refer to~\cite{MR4305006} when assuming a bound on~$u$ in $\mathrm L^4(\R^d,d\gamma)$ and to~\cite{Indrei_2023} when assuming a bound on $|x|^2\,u$ in $\mathrm L^2(\R^d,d\gamma)$. An earlier sequential stability result in $\mathrm L^2(\R^d,d\gamma)$ based on a Fourier technique was also given in~\cite{MR4305006} under Condition~\eqref{si}. Stability according to other notions of distance has been studied in~\cite{MR3665794,MR3320893,MR3666798}. Stability results where the distance to~$\mathscr M$ appears with a non-optimal exponent are known for instance from~\cite[Theorem~1.1]{MR3269872} where it is deduced from the HWI inequality due to F.~Otto and C.~Villani~\cite{MR1760620}. Such estimates have even been refined in~\cite{MR4116725}. There are now several other proofs. Various stability results have also been proved in Wasserstein's distance: we refer to~\cite{MR3271181,MR3269872,Fathi_2016,MR4305006,MR4455233,bolley2018dimensional,MR4116725,Indrei_2023} and in particular to~\cite[Theorem 1.3]{Indrei2024} for result with sharp exponent under an exponential moment condition. Stability in logarithmic Sobolev inequality can be related to deficit in Gaussian isoperimetry and we refer to~\cite{MR3269872} for an introduction to early results in this direction,~\cite{MR3630285} for a sharp, dimension-free quantitative Gaussian isoperimetric inequality, and~\cite{MR4116725} for recent results and further references. 

In this paper, we carefully distinguish (a) stability results where a distance to~$\mathscr M$ is controlled by the deficit, and (b) improved inequalities under appropriate constraints. Even if functions are normalized and centered, this is in some cases not enough for obtaining improved inequalities as shown in~\cite{Indrei_2023}. In fact, many counter-examples to stability are known, involving Wasserstein's distance for instance in~\cite{MR3271181,MR3269872,Fathi_2016,MR4305006,MR4455233}, weaker distances like $p$-Wasserstein, or stronger norms like $\mathrm L^p$ or $\mathrm H^1$: see for instance~\cite{MR4305006,Indrei_2023}. The classical counter-examples that apply to our setting are those of~\cite[Theorem~1.3]{MR4455233} and~\cite[Theorem~4]{MR4116725} but, as already noted in~\cite{MR4305006}, they are based on the fact that the second moment diverges along a sequence of test functions. In case of Theorem~\ref{Thm:Main}, this is forbidden by Assumption~\eqref{gautail}.

A large part of our intuition comes from the fact that the heat flow (or the Orn\-stein--Uhlenbeck flow) preserves log-concavity: see, \emph{e.g.},~\cite{zbMATH03522267,saumard2014log}. Log-concavity is a natural property in this framework for several reasons, see for instance~\cite{courtade2018quantitative}. By J.~Cheeger's inequality~\cite{MR0402831}, from log-concavity follows an explicit Poincar\'e inequality that we can use to establish an improved form of~\eqref{LSIg}. Also see~\cite{cattiaux2021functional} and references therein. However, what really matters is the Poincar\'e inequality, as noted by M.~Fathi, E.~Indrei and M.~Ledoux in~\cite{Fathi_2016}. Condition~\eqref{gautail} comes from the result of~\cite{chen2021dimension} obtained by H.-B.~Chen, S.~Chewi, and J.~Niles-Weed: such a Poincar\'e inequality holds under the evolution of the Ornstein--Uhlenbeck flow, after some explicit delay. The study of Poincar\'e inequalities evolved under~\eqref{OU} (or equivalently under convolutions with Gaussian measures) has many applications, \emph{cf.}~\cite{chen2021dimension}. The study of this question has been initiated in~\cite{zbMATH06261774,zbMATH06591637} and further investigated in~\cite{zbMATH06598355,Bardet_2018,chen2021dimension}. 

In~\cite[Theorem~1]{Fathi_2016}, the assumption is that $|u|^2d\gamma$ satisfies a Poincar\'e inequality. According to~\cite[Equation~(4.3)]{bobkov1997poincare}, this also implies the \emph{exponential moment} condition $\irdg{e^{\theta\,|x|}\,u^2}<\infty$, for some $\theta>0$. Condition~\eqref{gautail} is stronger than the \emph{exponential moment} condition, the classical example being the mesure $|u(x)|^2d\gamma(x)=e^{-|x|}\, dx$ which satisfies a Poincar\'e inequality but not~\eqref{gautail}. On the other hand,~\eqref{gautail} is, in some cases, less restrictive than the assumption of~\cite[Theorem~1]{Fathi_2016}, for instance, in the case of a compactly supported function $u$ with several disconnected components. Whether Theorem~\ref{Thm:Main} can be extended, eventually with additional restrictions, to the case of an \emph{exponential moment} condition is, to our knowledge, an open question.

Bounds on the Poincar\'e constant of a probability measure may depend on the dimension and degenerate for large dimensions according to~\cite{cattiaux2021functional,chen2021almost}. As a consequence, the same issue arises for the improvement of~\eqref{LSIg} of~\cite[Theorem 1]{Fathi_2016}. For strongly log-concave measures $u^2\,d\gamma$, one has a Poincar\'e inequality with a dimen\-sion-independent estimate of the constant according to~\cite{bakry1985diffusions}, but this class falls into the much wider class considered in Theorem~\ref{Thm:Main}.

\newpage\section{Preliminary observations, an important tool and some simple consequences}\label{Sec:Prelim}

\subsection{A discussion on stability estimates and improved inequalities}\label{Sec:Stab}

Let us define the \emph{deficit} functional of~\eqref{LSIg} by
\[
\delta[u]:=\nrmG{\nabla u}2^2-\frac12\irdg{|u|^2\,\log\(\frac{|u|^2}{\nrmG u2^2}\)}\,.
\]
The goal of \emph{stability estimates} is to find a notion of distance $\mathsf d$, an explicit constant $\beta>0$ and an explicit exponent $\alpha>0$ such that
\be{SuperStab}
\delta[u]\ge\beta\,\inf_{w\in\mathscr M}\mathsf d(u,w)^\alpha\quad\forall\,u\in\mathcal H\,.
\ee
It is known from~\cite[Corollary~1.2]{https://doi.org/10.48550/arxiv.2209.08651} that for some explicit, dimension-independent constant $\beta>0$, one has
\be{DEFFL}
\delta[u]\ge\beta\,\inf_{w\in\mathscr M}\,\nrmG{u-w}2^2\quad\forall\,u\in\mathrm H^1(\R^d,d\gamma)\,.
\ee
In this paper we consider \emph{improved inequalities} in the form
\be{stab}
\delta[u]\ge\beta\,\mathsf d(u,1)^\alpha\quad\forall\,u\in\mathcal H\,.
\ee
Any estimate of $\alpha$ and $\beta$ for~\eqref{stab} is also an estimate for~\eqref{SuperStab}, as $\inf_{w\in\mathscr M}\mathsf d(u,w)\le\mathsf d(u,1)$, because $w\equiv1\in\mathscr M$. When $\mathsf d(u,w)=\nrmG{u-w}2$, $\alpha=2$, and $\beta$ as in~\eqref{DEFFL}, Inequalities~\eqref{SuperStab} and~\eqref{stab} are in fact equivalent if $u$ is nonnegative, normalized and centred as proven in~\cite[Lemma~1]{Indrei_2023}. The equivalence between~\eqref{SuperStab} and~\eqref{stab} does not hold for $\mathsf d(u,w)=\|\nabla u-\nabla w\|_{\mathrm L^2(\R,d\gamma)}$ because the best possible exponent $\alpha$ in~\eqref{SuperStab} and~\eqref{stab} differ, as the following example shows. Assume that $d=1$ and consider the functions
\be{ex1}
u_\varepsilon(x)=1+\varepsilon\,x\quad\forall\,x\in\R^d
\ee
in the limit as $\varepsilon\to0$. With $\mathsf d(u,w)=\|u'-w'\|_{\mathrm L^2(\R,d\gamma)}$, we have
\[
d(u_\varepsilon,1)^2=\|u_\varepsilon'\|_{\mathrm L^2(\R,d\gamma)}^2=\varepsilon^2\quad\mbox{and}\quad\delta[u_\varepsilon]=\tfrac12\,\varepsilon^4+O\big(\varepsilon^6\big)\quad\mbox{as}\quad\varepsilon\to0\,.
\]
Hence, the best we can hope for in~\eqref{stab} written with $w_u=1$ is $\beta=1/2$ and $\alpha=4$. On the other hand, using the test function $w_{a_\varepsilon,c_\varepsilon}\in\mathscr M$ where $a_\varepsilon=2\,\varepsilon$ and $\log c_\varepsilon=-\,a_\varepsilon^2/4$, we obtain
\[
\inf_{w\in\mathscr M}\mathsf d(u_\varepsilon,w)^2\le d\(u_\varepsilon,w_{a_\varepsilon,c_\varepsilon}\)^2=\tfrac12\,\varepsilon^4+O\big(\varepsilon^6\big)=\delta[u_\varepsilon]+O\big(\varepsilon^6\big)\quad\mbox{as}\quad\varepsilon\to0\,,
\]
which would allow for $\beta=1$ and $\alpha=2$ in~\eqref{SuperStab}. Up to a Gaussian Poincar\'e inequality, this is compatible with the fact that~\eqref{DEFFL} still yields a stability estimate with $\alpha=2$, which is the smallest possible exponent: see~\cite[Theorem~2]{Indrei_2023}. On the other hand, the example of $u_\varepsilon$ given by~\eqref{ex1} suggests that, for non-centred functions, $\alpha=4$ is the best possible exponent in~\eqref{stab} for the distance $\mathsf d(u,w)=\|\nabla u-\nabla w\|_{\mathrm L^2(\R,d\gamma)}$.  

To establish improved inequalities with strong notions of distance $\mathsf d$, one needs, unlike in~\eqref{SuperStab}, an \emph{additional condition}. 
In the case of $\mathsf d(u,w)=\nrmG{\nabla u-\nabla w}2$, at least a control on the second-order moment $\mathsf K=\irdg{|x|^2\,u^2}$ is necessary: in~\cite{MR4305006,MR4455233}, for any $\mathsf K>d$, the authors build sequences $(u_\varepsilon)_{\varepsilon>0}$ of functions in $\mathcal H$ which satisfy~\eqref{Moment} such that
\[
\lim_{\varepsilon\to0_+}\delta[u_\varepsilon]=0\quad\mbox{and}\quad\lim_{\varepsilon\to0_+}\mathrm W_2^2\(u_\varepsilon^2\,\gamma,\gamma\)=\frac12\,(\mathsf K-d)
\]
where $\mathrm W_2$ denotes the $2$-Wasserstein distance. By~\eqref{LSIg} and the Talagrand inequality,
\[
2\,\nrmG{\nabla u_\varepsilon}2^2\ge\irdg{|u_\varepsilon|^2\,\log|u_\varepsilon|^2}\ge\frac12\,\mathrm W_2^2\(u_\varepsilon^2\,\gamma,\gamma\)\,,
\]
so that~\eqref{stab} cannot hold along such a sequence with a distance $\mathsf d$ that controls $\mathrm W_2$. Moreover, in~\cite[Theorem~1]{Indrei_2023}, E.~Indrei proves that, for any sequence $(u_n)_{n\in\N}$ of functions in $\mathcal H$ such that~\eqref{Moment} holds, one can deduce that $\lim_{n\to+\infty}\nrmG{\nabla u_n}2=0$ from $\lim_{n\to+\infty}\delta[u_n]=0$ if and only if the condition $\lim_{n\to+\infty}\nrmG{x\,u_n}2=\sqrt d$ is satisfied. Nevertheless, several results are available under a second moment condition~\cite{MR3269872,Dolbeault_2015}, fourth moment condition~\cite{Indrei_2023} and in the class of probability densities which satisfy a Poincar\'e inequality~\cite{Fathi_2016}. See also Corollary~\ref{Cor1} for a result under a second moment condition.

Let us comment on our main result Theorem~\ref{Thm:Main}. We aim at results in the strongest possible notion of distance, \emph{i.e.}, $\mathsf d(u,w)=\|u-w\|_{\mathrm H^1(\R^d,d\gamma)}$ where $w\in\mathscr M$. From Theorem~\ref{Thm:Main} and by the Gaussian Poincar\'e inequality, we find
\[
\delta[u]\ge\eta\,\nrmG{\nabla u}2^2\ge\frac\eta2\,\|u-1\|^2_{\mathrm H^1(\R^d,d\gamma)}
\]
for all $u\in\mathrm H^1(\R^d,d\gamma)$ satisfying~\eqref{Moment} and~\eqref{gautail}: this proves~\eqref{stab} for $\alpha=2$ and $\beta=\eta/2$. Notice that by two-homogeneity of $\delta$, $\alpha=2$ is the best possible exponent. Assumption~\eqref{Moment} is crucial, as illustrated by the functions $u_\varepsilon$ defined by~\eqref{ex1}. Condition~\eqref{gautail} is sharp in the following sense: for any $0<\varepsilon<1/2$, there exists a sequence $(u_{\varepsilon, n})_{n\in\N}$ of functions in~$\mathcal H$ built as in~\cite{MR4305006,MR4455233} and satisfying~\eqref{Moment} such that
\[
\lim_{n\to\infty}\irdg{e^{2\,\varepsilon\,|x|^2}\,|u_{\varepsilon,n}|^2}=+\infty\,,\quad\liminf_{n\to\infty}\nrmG{\nabla u_{\varepsilon,n}}2^2>0\,
\;\mbox{and}\;\lim_{n\to\infty}\delta[u_{\varepsilon,n}]=0\,,
\]
so that $\lim_{n\to\infty}\delta[u_{\varepsilon,n}]/\nrmG{\nabla u_{\varepsilon,n}}2^2=0$. How to fill the gap between a control on the second-order moment, which is a necessary for~\eqref{stab}, and the much more restrictive condition of Theorem~\ref{Thm:Main}, is an open question.

\subsection{The Ornstein--Uhlenbeck equation and the \emph{carr\'e du champ} method}\label{Sec:OU}

Let us recall some classical results on~\eqref{OU}. If $h_0\in\mathrm{L}^1(\R^d,d\gamma)$ is nonnegative, then there exists a unique nonnegative weak solution to~\eqref{OU} (see for instance~\cite{MR2597943}). The two key properties of the Ornstein--Uhlenbeck operator $\L h=\Delta h-x\cdot\nabla h $ are
\[
\irdg{h_1\,(\L h_2)}=-\irdg{\nabla h_1\cdot\nabla h_2}\quad\mbox{and}\quad[\nabla,\L]\,h=-\,\nabla h\,.
\]
As a consequence, we obtain the two identities
\be{Id1}
\irdg{(\L h)^2}=\irdg{\|\mathrm{Hess}\,h\|^2}+\irdg{|\nabla h|^2}
\ee
and
\be{Id2}
\irdg{\L h\,\frac{|\nabla h|^2}h}=-\,2\irdg{\mathrm{Hess}\,h:\frac{\nabla h\otimes\nabla h}h}+\irdg{\frac{|\nabla h|^4}{h^2}}\,,
\ee
where $\mathrm{Hess}\,h=(\nabla\otimes\nabla) h$ is the \emph{Hessian matrix} of $h$. Here we use the following notations. If $a$ and $b$ take values in $\R^d$, $a\otimes b$ denotes the matrix $(a_i\,b_j)_{1\le i,j\le d}$. With matrix valued $m=(m_{i,j})_{1\le i,j\le d}$ and $n=(n_{i,j})_{1\le i,j\le d}$, we define $m:n=\sum_{i,j=1}^dm_{i,j}\,n_{i,j}$ and $\|m\|^2=m:m$. If $h$ is a nonnegative solution of~\eqref{OU}, we also notice that $v=\sqrt h$ solves
\be{OUv}
\frac{\partial v}{\partial t}=\L v+\frac{|\nabla v|^2}v\,.
\ee
Let us fix $\nrmG v2=1$. The \emph{entropy} and the \emph{Fisher information}, respectively defined by
\[
\E[v]:=\irdg{|v|^2\,\log|v|^2}\quad\mbox{and}\quad\I[v]:=\irdg{|\nabla v|^2}\,,
\]
evolve along the flow~\eqref{OUv} according to
\[
\frac d{dt}\E[v(t,\cdot)]=-\,4\,\I[v(t,\cdot)]\quad\mbox{and}\quad\frac d{dt}\I[v(t,\cdot)]=-\,2\irdg{\((\L v)^2+\L v\,\frac{|\nabla v|^2}v\)}
\]
if $v$ solves~\eqref{OUv}. Using~\eqref{Id1} and~\eqref{Id2}, we obtain the classical expression of the \emph{carr\'e du champ} method
\be{cdc}
\frac d{dt}\I[v(t,\cdot)]+2\,\I[v(t,\cdot)]=-\,2\irdg{\left\|\mathrm{Hess}\,v-\frac{\nabla v\otimes\nabla v}v\right\|^2}
\ee
as for instance in~\cite{MR1842428,MR2435196,MR3155209}. By writing that
\[
\frac d{dt}\delta[v(t,\cdot)]\le0\quad\mbox{and}\quad\lim_{t\to+\infty}\delta[v(t,\cdot)]=0\,,
\]
we recover the standard proof of the \emph{entropy -- entropy production} inequality~\eqref{LSIg}, by the \emph{carr\'e du champ} method of~\cite{bakry1985diffusions}.

Several of the above expression can be rephrased in terms of the \emph{pressure variable}
\[
P:=-\,\log h=-\,2\,\log v
\]
using the following elementary identities
\begin{align*}
&\nabla v=-\,\frac12\,\sqrt h\,\nabla P\,,\quad\frac{\nabla v\otimes\nabla v}v=\frac14\,\sqrt h\,\nabla P\otimes\nabla P\,,\\
&\mathrm{Hess}\,v=-\,\frac12\,\sqrt h\,\mathrm{Hess}\,P+\frac14\,\sqrt h\,\nabla P\otimes\nabla P\,,
\end{align*}
so that, by taking into account $v\,\nabla P=-\,2\,\nabla v$ and $h=v^2$, we have
\[
\I[v]=\frac14\irdg{|\nabla P|^2\,h}\quad\mbox{and}\;\irdg{\left\|\mathrm{Hess}\,v-\frac{\nabla v\otimes\nabla v}v\right\|^2}=\frac14\irdg{\|\mathrm{Hess}\,P\|^2\,h}\,.
\]

\subsection{A backward in time estimate}\label{Sec:Backward}

Although rather elementary, this key tool of our paper is based on the following observation: if a solution $h$ of~\eqref{OU} is such that~\eqref{LSIg} holds for $v=\sqrt h$ with an improved constant at some time $T>0$, then this is also the case for the initial datum. A similar approach was used in~\cite[Lemma~2.9]{bonforte2020stability} in the case of a different flow.
\begin{lemma}\label{Lem:Backward}
Let $u\in\mathcal H$ be such that~\eqref{Moment} holds and consider the solution $v$ of~\eqref{OUv} with initial datum $u$. If for some $T>0$ we have $\delta[w]\ge c\,\nrmG{\nabla w}2^2$ with $w:=v(T,\cdot)$ for some $c \in (0,1)$, then
\[
\delta[u]\ge c\,e^{-2T}\nrmG{\nabla u}2^2\,.
\]
\end{lemma}
\noindent Notice that $c\ge1$ is impossible unless $c=1$ and $w$ is a constant.
\begin{proof} The~\eqref{LSIg} ensures that $\mathscr Q(t):=\I\left[v(t,\cdot)\right]/\E\left[v(t,\cdot)\right]\ge1/2$ for all $t\ge0$. By our assumption, $(1-c)\,\mathscr Q(T)\ge1/2$ and we learn from~\eqref{cdc} that
\[
\frac{d\mathscr Q}{dt}\le2\,\mathscr Q\,(2\,\mathscr Q-1)\,.
\]
The conclusion follows from an integration on $(0,T)$, which shows that
\[
\mathscr Q(0)\ge\frac12\,\frac1{1-e^{-2T}\,c}\,.
\]
\vspace*{-18pt}\end{proof}

\subsection{Some simple stability estimates}\label{Sec:Simple}

In this section, we collect various stability estimates and provide new proofs or explicit estimates which are new. We put the emphasis on the use of the Ornstein--Uhlenbeck equation~\eqref{OU} and on the improvements based on the \emph{carr\'e du champ} method.

\subsubsection{Improvements under moment constraints}\label{Sec:Moment}

In standard computations of the \emph{carr\'e du champ} method, one usually drops the Hessian terms, as those in right-hand side of~\eqref{cdc}. Keeping track of the remainder terms provides us with improvements as shown in~\cite{MR2152502,Demange-PhD,MR3103175} in various interpolation inequalities but generically fails in the case of the logarithmic Sobolev inequality. We remedy to this issue by introducing moment constraints.
\begin{lemma}\label{lem1} If $\sqrt h=v\in\mathcal H\cap\mathrm H^2(\R^d,d\gamma)$ is such that~\eqref{Moment} holds and $x\,v\in\mathrm L^2(\R^d,d\gamma)$, then
\[
4\,\I[v]\le\sqrt{d\irdg{\|\mathrm{Hess}\,P\|^2\,h}}+\irdg{|x|^2\,h}-d\,.
\]
\end{lemma}
\begin{proof} Using $h\,\nabla P=-\,\nabla h$, we obtain
\[
4\,\I[v]=\irdg{|\nabla P|^2\,h}=-\irdg{\nabla P\cdot\nabla h}=\irdg{h\,(\L P)}\,.
\]
After recalling that $\L P=\Delta P-x\cdot\nabla P$, we deduce that
\[
-\irdg{h\,x\cdot\nabla P}=\irdg{x\cdot\nabla h}=\irdg{h\(|x|^2-d\)}=\irdg{|v|^2\(|x|^2-d\)}
\]
using an integration by parts, which proves
\[
4\,\I[v]\le\irdg{(\Delta P)\,h}\,.
\]
The use of the Cauchy-Schwarz inequality and the arithmetic-geometric inequality
\[
(\Delta P)^2\le d\,\|\mathrm{Hess}\,P\|^2
\]
completes the proof.
\end{proof}
With the estimate of Lemma~\ref{lem1} on $\I[v]$, we have the following~result.
\begin{corollary}\label{Cor1} Let $\Psi(s):=s-\frac d4\,\log\(1+\frac4d\,s\)$. For all $u\in\mathcal H$ such that~\eqref{Moment} holds and $\nrmG{x\,u}2^2\le d$, we have the stability estimate
\be{stabsq2}
\delta[u]\ge\Psi\(\nrmG{\nabla u}2^2\)\,.
\ee
\end{corollary}
\noindent Notice that $\Psi$ is such that $\Psi(s)=\frac2d\,s^2+o(s^2)$ as $s\to0_+$, so that $\alpha=4$ is the minimal admissible exponent in Inequality~\eqref{stab}, at least for results obtained by this method.
\begin{proof}[Proof of Corollary~\ref{Cor1}] Let $h=v^2$ be the solution of~\eqref{OU} with initial datum $h_0=u^2$. Since $x\mapsto\big(|x|^2-d\big)$ is an eigenfunction of $\L$ with corresponding eigenvalue $-\,2$ and~$\L$ is self-adjoint on $\mathrm L^2(\R^d,d\gamma)$, we have that $\K(t):=\irdg{\big(|x|^2-d\big)\,h}$ satisfies
\be{MomentEvol}
\frac{d\K}{dt}=\irdg{\big(|x|^2-d\big)\,(\L h)}=\irdg{h\,\L\big(|x|^2-d\big)}=-\,2\,\K\,.
\ee
The sign of $t\mapsto\K(t)$ is conserved and in particular we have that $\irdg{|x|^2\,|v|^2}\le d$ for any $t\ge0$. For any $i=1$, $2$\ldots $d$, we also notice that $x\mapsto x_i$ is also an eigenfunction of $\L$ with corresponding eigenvalue $-\,1$ so that
\[
\frac d{dt}\irdg{x\,h}=-\irdg{x\,h}
\]
and, as a consequence $\irdg{x\,h(t,\cdot)}=0$ for all $t\ge0$ because $\irdg{x\,h_0}=0$.

For smooth enough solutions, we deduce from Lemma~\ref{lem1},~\eqref{cdc} and~\eqref{LSIg} that
\[
\frac d{dt}\I[v(t,\cdot)]+2\,\I[v(t,\cdot)]\le-\,\frac8d\,\I^2[v(t,\cdot)]\le\frac1{2\,d}\,\frac d{dt}\big(\E[v(t,\cdot)]\big)^2
\]
if $v$ solves~\eqref{OUv}. The fact that $\lim_{t\to+\infty}\I[v(t,\cdot)]=0$ follows from a Gronwall estimate relying on $\frac d{dt}\I[v(t,\cdot)]\le-\,2\,\I[v(t,\cdot)]$ and $\lim_{t\to+\infty}\E[v(t,\cdot)]=0$ is obtained as a consequence of~\eqref{LSIg}. Since $t\mapsto\I[v(t,\cdot)]-\frac12\,\E[v(t,\cdot)]-\frac1{2\,d}\,\big(\E[v(t,\cdot)]\big)^2$ is monotone nonincreasing with limit $0$ as $t\to+\infty$, we conclude that it is nonnegative for any $t\ge0$ and, as a special case, at $t=0$, thus proving that
\be{Improved1}
\delta[u]=\I[u]-\frac12\,\E[u]\ge\frac1{2\,d}\big(\E[u]\big)^2\,.
\ee

A better estimate goes as follows. Let
\[
\Phi(s):=\frac d4\(e^{\frac2d\,s}-1\)\quad\forall\,s\ge0\,.
\]
Using $\frac d{dt}\E[v(t,\cdot)]=-\,4\,\I[v(t,\cdot)]$, we notice that
\[
\frac d{dt}\Big(\I[v(t,\cdot)]-\Phi\big(\E[v(t,\cdot)]\big)\Big)\le-\,\frac8d\Big(\I[v(t,\cdot)]-\Phi\big(\E[v(t,\cdot)]\big)\Big)\, \I[v(t,\cdot)].
\]
As before, we know that
\[
\lim_{t\to+\infty}\Big(\I[v(t,\cdot)]-\Phi\big(\E[v(t,\cdot)]\big)\Big)=0\,.
\]
Moreover, Gronwall estimates show that $\I[v(t,\cdot)]-\Phi\big(\E[v(t,\cdot)]\big)$ cannot change sign and an asymptotic expansion as $t\to+\infty$ as in~\cite[Appendix~B.4]{https://doi.org/10.48550/arxiv.2211.13180} is enough to obtain that $\I[v(t,\cdot)]-\Phi\big(\E[v(t,\cdot)]\big)$ takes nonnegative values for $t>0$ large enough. Altogether, we conclude that
\[
\I[v(t,\cdot)]-\Phi\big(\E[v(t,\cdot)]\big)\ge0
\]
for any $t\ge0$ and, as a particular case, at $t=0$ for $v(0,\cdot)=u$. This provides us with the estimate
\[
\nrmG{\nabla u}2^2\ge\frac d4\(\,e^{\frac2d\irdg{|u|^2\,\log|u|^2}}-1\)\,.
\]
In the general case, one can get rid of the $\mathrm H^2(\R^d,d\gamma)$ regularity of Lemma~\ref{lem1} by a standard approximation scheme, which is classical and will not be detailed here.

As in~\cite{https://doi.org/10.48550/arxiv.2211.13180}, an estimate in a stronger norm is achieved as follows.  The function $\Phi$ is convex increasing and, as such, invertible, so that we can also write
\[
\Phi^{-1}\big(\I[u]\big)-\E[u]\ge0\,.
\]
This completes the proof of~\eqref{stabsq2} with the convex monotone increasing function
\[
\Psi(s):=s-\frac12\,\Phi^{-1}(s)\quad\forall\,s\ge0\,.
\]
\end{proof}
As far as we know, the above proof of Corollary~\ref{Cor1} is new, but the result was known by other methods, as explained in Section~\ref{Sec:Stab}. An inequality in the spirit of~\eqref{stabsq2} is proved in~\cite[Theorem~1.1 and Inequality~(1.8)]{MR3269872} using probabilistic tools. Inequality~\eqref{stabsq2} is also established in~\cite{Dolbeault_2015} using the scaling properties of the Euclidean version of~\eqref{LSIg} under the more restrictive second moment condition $\nrmG{x\,u}2^2=d$. In dimension $d=1$ with a second moment finite but arbitrarily large, a stability result in a weak norm can be found in~\cite[Theorem 1.2]{Indrei2024}. If $\K[u]<0$,~\emph{i.e.},
\[
\nrmG{x\,u}2^2<d\,,
\]
further improvements can be achieved. Here is an example. With
\[
\J[u]:=\I[u]-\frac14\,\K[u]\,,
\]
we notice that~\eqref{LSIg} can be recast as
\[
\J[u]\ge\frac12\(\E[u]-\frac12\,\K[u]\)\,.
\]
Moreover, if $v$ solves~\eqref{OUv}, then
\[
\frac d{dt}\(\E[v(t,\cdot)]-\frac12\,\K[v(t,\cdot)]\)=-\,4\,\J[v(t,\cdot)]
\]
and using Lemma~\ref{lem1}, the estimate in the proof of Corollary~\ref{Cor1} becomes
\begin{align*}
\frac d{dt}\J[v(t,\cdot)]+2\,\J[v(t,\cdot)]&=\frac d{dt}\I[v(t,\cdot)]+2\,\I[v(t,\cdot)]\\
&\le-\,\frac8d\,\J^2[v(t,\cdot)]\le\frac1{2\,d}\,\frac d{dt}\(\E[v(t,\cdot)]-\frac12\,\K[v(t,\cdot)]\)^2\,.
\end{align*}
An integration from $t=0$ to $+\infty$ shows that
\[
\delta[u]\ge\frac1{2\,d}\(\E[u]-\frac12\,\K[u]\)^2\,,
\]
which is an improvement upon~\eqref{Improved1} under the assumption that $\K[u]<0$, as we know that $\E[u]\ge0$ by Jensen's inequality.

\subsubsection{A result in the framework of log-concave densities}\label{Sec:log-concavity}

We say that a measure $d\mu$ with density $e^{-\psi}$ with respect to Lebesgue's measure is a \emph{log-concave probability measure} if $\psi$ is a convex function.
\begin{lemma}\label{Lem:Cheeger}
If $d\mu$ is a log-concave probability measure such that $\int_{\R^d}|x-x_\mu|^2\,d\mu\le\mathsf K$ where $x_\mu=\int_{\R^d}x\,d\mu$, then we have the \emph{Poincar\'e inequality}
\be{Poincare1}
\int_{\R^d}|\nabla \varphi|^2\,d\mu\ge\frac1{432\,\mathsf K}\int_{\R^d}|\varphi|^2\,d\mu\quad\forall\,\varphi\in\mathrm H^1(\R^d,d\mu)\;\mbox{such that}\;\int_{\R^d}\varphi\,d\mu=0\,.
\ee
\end{lemma}
\begin{proof} Let us denote by $\lambda_1(\mu)$ the first positive eigenvalue of $-\,\L_\psi$ where $\L_\psi$ is the \emph{Ornstein--Uhlenbeck operator} $\L_\psi:=\Delta-\nabla\psi\cdot\nabla$. We learn from~\cite[Theorem~1.2]{MR1742893} and~\cite[Ineq.~(3.4)]{MR1742893} that $432\,\mathsf K\,\lambda_1(\mu)\ge1$.
\end{proof}

\begin{lemma}\label{lemma5} Let us consider a solution $h$ of~\eqref{OU} with initial datum $h_0$ and assume that $d\mu_0:=h_0\,d\gamma$ is a log-concave probability measure. Then $d\mu_t:=h(t,\cdot)\,d\gamma$ is a log-concave probability measure for all $t\ge0$. \end{lemma}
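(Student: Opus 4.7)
The plan is to reinterpret the statement probabilistically and then invoke Prékopa's theorem. The point is that $\mu_t$ is nothing but the law at time $t$ of an Ornstein-Uhlenbeck diffusion started from $\mu_0$, and Mehler's representation exhibits this law as the convolution of two log-concave distributions, which preserves log-concavity.

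First, I would observe that the density $\rho(t,x):=h(t,x)\,\gamma(x)$ of $\mu_t$ with respect to Lebesgue measure satisfies the adjoint (Fokker-Planck) equation
\[
\partial_t\rho=\Delta\rho+\nabla\cdot(x\,\rho)\,,
\]
which is the forward Kolmogorov equation for the stochastic differential equation $dX_t=-X_t\,dt+\sqrt2\,dW_t$. Hence, if $X_0$ has law $\mu_0$, then $X_t$ has law $\mu_t$ for every $t\ge 0$, so that it suffices to check that the law of $X_t$ has a log-concave density whenever that of $X_0$ does.

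Next, I would solve this linear SDE explicitly --- equivalently, rewrite the Mehler formula for the semigroup $e^{t\L}$ at the level of densities --- to obtain the decomposition
\[
X_t \stackrel{d}{=} e^{-t}\,X_0+\sqrt{1-e^{-2t}}\,Z\,,\qquad Z\sim\mathcal N(0,I_d)\,,\quad Z\perp X_0\,.
\]
The random variable $e^{-t}X_0$ has density $x\mapsto e^{d\,t}\,\rho_0(e^t\,x)$, which is log-concave because $\rho_0=h_0\,\gamma$ is and log-concavity is invariant under linear bijections; the second summand has a centred Gaussian density, manifestly log-concave. I would then invoke Prékopa's theorem --- a consequence of the Prékopa-Leindler inequality asserting that the sum of two independent $\R^d$-valued random vectors with log-concave densities has a log-concave density --- to conclude that $\rho(t,\cdot)$, hence $\mu_t$, is log-concave for every $t\ge 0$.

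The argument is essentially soft, and the main thing to justify carefully is the equivalence between the PDE formulation of~\eqref{OU} and the probabilistic picture, namely the uniqueness of the weak solution to the Fokker-Planck equation with initial datum $\rho_0$; this is classical. A secondary technical point is that $\rho_0$ may be degenerate (supported on a lower-dimensional set, or merely upper semicontinuous), but for $t>0$ the Gaussian factor automatically regularises the convolution and yields a strictly positive, $C^\infty$, log-concave density.
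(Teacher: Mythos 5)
Your proof is correct and takes essentially the same approach as the paper: after passing to the Fokker--Planck form, the paper changes variables to reduce to the heat equation and invokes the heat-kernel representation together with preservation of log-concavity under convolution, which is precisely the content of your Mehler/SDE decomposition combined with Pr\'ekopa's theorem. The probabilistic language is a cosmetic difference; the underlying mechanism (rescaled initial datum convolved with a Gaussian, then log-concavity stable under convolution) is identical.
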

\begin{proof} The function $g:=h\,\gamma$ solves the Fokker-Planck equation
\[
\frac{\partial g}{\partial t}=\Delta g+\nabla\cdot(x\,g)
\]
and the function $f$ such that
\be{ChangeOfVariables}
f(\taus,x):=\(1+2\,\taus\)^{-\frac d2}\,g\(\frac12\,\log(1+2\,\taus),\frac x{\sqrt{1+2\,\taus}}\)\quad\forall\,(\taus,x)\in\R^+\times\R^d
\ee
solves the heat equation
\be{heat}
\frac{\partial f}{\partial \taus}=\Delta f\quad\forall\,(\taus,x)\in\R^+\times\R^d\,.
\ee
Hence $f$ can be represented using the heat kernel. According for instance to~\cite{saumard2014log,Bardet_2018}, log-concavity is preserved under convolution, which completes the proof.
\end{proof}

\begin{lemma}\label{Lem:P} If $h\in\mathrm H^1(\R^d,d\gamma)$ is such that $\irdg{x\,h}=0$ and $P=-\,\log h$ is the \emph{pressure variable}, then
\[
\irdg{\nabla P\,h}=0\,.
\]
\end{lemma}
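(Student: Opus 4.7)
The plan is to reduce the claim to a one-line integration by parts, using the definition of the pressure variable and the Gaussian weight.

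First I would write $\nabla P = -\nabla h / h$, so that $h\,\nabla P = -\nabla h$. Substituting this into the integral gives
\[
\irdg{\nabla P\,h} = -\int_{\R^d}\nabla h(x)\,\gamma(x)\,dx\,.
\]
Then I would integrate by parts in the Lebesgue integral, using $\nabla\gamma(x) = -x\,\gamma(x)$, to rewrite the right-hand side as
\[
-\int_{\R^d}\nabla h\,\gamma\,dx = \int_{\R^d}h\,\nabla\gamma\,dx = -\int_{\R^d}x\,h\,\gamma\,dx = -\irdg{x\,h}\,.
\]
By the moment hypothesis $\irdg{x\,h}=0$, the conclusion follows.

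The only technical point is justifying the integration by parts: I would invoke that $h\in\mathrm H^1(\R^d,d\gamma)$ together with the fact that $\gamma$ decays faster than any polynomial, so that boundary terms at infinity vanish (approximating by compactly supported functions if needed). This is standard and does not really constitute an obstacle; the content of the lemma is simply the observation that the centering condition $\irdg{x\,h}=0$ is equivalent to the vanishing of the first moment of $\nabla P$ against the density $h\,\gamma$.
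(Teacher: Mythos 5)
Your proof is correct and follows essentially the same route as the paper's one-line argument: write $h\,\nabla P=-\nabla h$, integrate by parts against $\gamma$ using $\nabla\gamma=-x\,\gamma$, and invoke the centering condition. (Your sign bookkeeping, giving $\irdg{\nabla P\,h}=-\irdg{x\,h}$, is in fact cleaner than the paper's displayed chain, which has an inconsequential sign slip; both sides vanish under the hypothesis.)
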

\begin{proof} The result follows from $\irdg{\nabla P\,h}=-\irdg{\nabla h}=\irdg{x\,h}=0$.\end{proof}

Let
\be{Cstar}
\mathscr C_\star(\mathsf K)=1+ \, \frac1{432\,\mathsf K}
\ee
where $1/432\approx0.00231481$.
\begin{proposition}\label{Prop3} For all $u\in\mathcal H$ such that~\eqref{Moment} holds, $\irdg{|x|^2\,|u|^2}=\mathsf K$ and $u^2\,\gamma$ is log-concave, with $\mathscr C_\star$ defined by~\eqref{Cstar}, we have
\[
\nrmG{\nabla u}2^2-\frac12\,\mathscr C_\star\big(\max(\mathsf K,d)\big)\irdg{|u|^2\,\log|u|^2}\ge0\,.
\]
\end{proposition}
\begin{proof} The solution $h=v^2$ of~\eqref{OU} is such that $\irdg{x\,h}=0$ and Lemma~\ref{Lem:P} applies. Since $h(t,\cdot)\,\gamma$ is log-concave for any $t\ge0$ by Lemma~\ref{lemma5}, we can apply~\eqref{Poincare1} with $f=\partial P/\partial x_i$ for any $i=1$, $2$,\ldots $d$ and obtain
\[
\irdg{\|\mathrm{Hess}\,P\|^2\,h}\ge\frac1{432\,\max(\mathsf K,d)}\irdg{|\nabla P|^2\,h}
\]
because, using~\eqref{MomentEvol}, $d+\K=d+(\mathsf K-d)\,e^{-2\,t}\le\max(\mathsf K,d)$. It follows from~\eqref{cdc} that
\[
\frac d{dt}\I[v(t,\cdot)]+2\,\mathscr C_\star(\mathsf K)\I[v(t,\cdot)]\le0\,,
\]
and the stability result is obtained as in the standard proof of the \emph{entropy -- entropy production} inequality~\eqref{LSIg} by the \emph{carr\'e du champ} method, \emph{cf.}~Section~\ref{Sec:OU}.\end{proof}

\subsubsection{From compact support to log-concavity}\label{Sec:CompactSupport}

The log-concavity property becomes true under the action of the flow of~\eqref{OU} after some delay~$t_\star$ for large classes of initial data. With the notation of Lemma~\ref{lemma5}, for any $R>0$, we read from~\cite[Theorem~5.1]{lee2003geometrical} by K.~Lee and J-L.~V\'azquez that $d\mu_t=|v(t,\cdot)|^2\,d\gamma$ is log-concave for any
\be{tstarR}
t\ge t_\star(R):=\log\(\sqrt{R^2+1}\,\)
\ee
if $v$ is solves~\eqref{OUv} with a compactly supported initial datum $u$ that is supported in a ball of radius $R>0$. The reduction from~\eqref{OUv} to the heat flow~\eqref{heat} goes as in the proof of Lemma~\ref{lemma5}. As a consequence, we know that~\eqref{Poincare1} holds for any $t\ge t_\star(R)$.
\begin{corollary}\label{Cor:Compact-LeeVazquez} Let $d\ge1$ and assume that $u\in\mathcal H$ is compactly supported in a ball of radius $R>0$. Then for all $u\in\mathcal H$ such that~\eqref{Moment} holds, with $\mathscr C_\star$ defined by~\eqref{Cstar}, we have
\[
\irdg{|\nabla u|^2}\ge\frac{\mathscr C}2\irdg{|u|^2\,\log|u|^2}
\]
with
\[
\mathscr C=1+\frac{\mathscr C_\star(\mathsf K_\star)-1}{1+R^2\,\mathscr C_\star(\mathsf K_\star)}\quad\mbox{and}\quad\mathsf K_\star:=\,\max\(d,\frac{(d+1)\,R^2}{1+R^2}\)\,.
\]
\end{corollary}
\begin{proof} Corollary~\ref{Cor:Compact-LeeVazquez} follows from $t_\star(R)$ given by~\eqref{tstarR} so that $e^{-2\,t_\star(R)}=1/(1+R^2)$, Proposition~\ref{Prop3} applied at $t=t_\star(R)$ with $\mathsf K=(d+1)\,R^2/(1+R^2) \ge \K\big(t_\star(R)\big)+d$ by~\eqref{MomentEvol}, and Lemma~\ref{Lem:Backward} applied with $T=t_\star(R)$ and $c=1-1/\mathscr C_\star(\mathsf K_\star)$, so that
\[
\delta[u]=\nrmG{\nabla u}2^2-\frac12\irdg{|u|^2\,\log|u|^2}\ge\frac{\mathscr C_\star(\mathsf K_\star)-1}{\big(1+R^2\big)\,\mathscr C_\star(\mathsf K_\star)}\,\nrmG{\nabla u}2^2\,.
\]
\end{proof}

\section{Proof of Theorem \texorpdfstring{\ref{Thm:Main}}1}\label{Sec:Proof}

\noindent The proof of Theorem~\ref{Thm:Main} is based on three ingredients:
\begin{enumerate}
\item A stability result for~\eqref{LSIg} obtained by M.~Fathi, E.~Indrei, and M.~Ledoux in~\cite{Fathi_2016} for a special class of initial data satisfying a Poincar\'e inequality,
\item The result of~\cite{chen2021dimension} by H.-B.~Chen, S.~Chewi, and J.~Niles-Weed which states that after a finite time $ T\ge0$, a solution to the Ornstein--Uhlenbeck flow is in the above class under Condition~\eqref{gautail},
\item The backward in time argument based on the \emph{carr\'e du champ} method of Section~\ref{Sec:Backward}, as in~\cite{bonforte2020stability}, which is used on the interval $[0,T)$.
\end{enumerate}

\subsection{Evolving a Poincar\'e inequality by the Ornstein--Uhlenbeck flow and application}\label{Sec:Fathi-Indrei-Ledoux}
In this section we collect some information about the Poincar\'e inequality for a measure $v^2\,d\gamma$ where the function $v$ is evolving under the Ornstein-Uhlenbeck equation~\eqref{OUv}. As in~\cite{Fathi_2016}, let us define $\mathcal P(\lambda)$, for any $\lambda>0$, as the set of all functions~$u$ such that the measure $u^2\,d\gamma$ is a probability measure which satisfies a Poincar\'e inequality
\[
\irdg{|\nabla \varphi|^2\,u^2}\ge\lambda\irdg{|\varphi|^2\,u^2}\quad\forall\,\varphi\in\mathcal H_u
\]
where $\mathcal H_u$ is the space of the functions $\varphi\in\mathrm H^1(\R^d,u^2\,d\gamma)$ such that $\irdg{\varphi\,u^2}=0$. The following Lemma is a key step in obtaining~\cite[Theorem 1]{Fathi_2016} and its proof can be found in~\cite[Section 2]{Fathi_2016}.
\begin{lemma}\label{Lem:Fathi-Indrei-Ledoux}
If $v(t,\cdot)$ solves~\eqref{OUv} with initial datum $v(t=0,\cdot)=u$ such that $u\in\mathcal P(\lambda)$ for some $\lambda>0$, then for any $t\ge0$ we have the Poincar\'e inequality
\[
\irdg{|\nabla \varphi|^2\,|v(t,\cdot)|^2}\ge\frac\lambda{\lambda+(1-\lambda)\,e^{-2\,t}}\irdg{|\varphi|^2\,|v(t,\cdot)|^2}\quad\forall\,\varphi\in\mathcal H_{v(t,\cdot)}\,.
\]
\end{lemma}
\noindent Let us consider the function
\[
\sigma(\lambda):=\frac{\lambda^2-\lambda-\lambda\,\log\lambda}{(\lambda-1)^2}\quad\forall\lambda\in(0,1)\cup(1,+\infty)
\]
and extend it by $\sigma(1)=1/2$. We may notice that $\sigma$ is monotone increasing and concave, with $\lim_{\lambda\to0_+}\sigma(\lambda)=0$ and $\lim_{\lambda\to+\infty}\sigma(\lambda)=1$. An interesting consequence of Lemma~\ref{Lem:Fathi-Indrei-Ledoux} is the following stability estimate for~\eqref{LSIg}.
\begin{corollary}[\cite{Fathi_2016}]\label{Cor:Fathi-Indrei-Ledoux}
Let $\lambda>0$ and $u\in\mathcal H$ satisfying Condition~\eqref{Moment} and such that $u\in\mathcal P(\lambda)$, then we have that
\[
\delta[u]\ge\sigma(\lambda)\,\nrmG{\nabla u}2^2\,.
\]
\end{corollary}
\noindent The proof of Corollary~\ref{Cor:Fathi-Indrei-Ledoux} relies on the same strategy as in the proof of Proposition~\ref{Prop3}, except that one has to use the Poincar\'e  inequality of Lemma~\ref{Lem:Fathi-Indrei-Ledoux} to write that
\[
\irdg{\|\mathrm{Hess}\,P\|^2\,h}\ge\frac\lambda{\lambda+(1-\lambda)\,e^{-2\,t}}\irdg{|\nabla P|^2\,h}\quad\forall\,t\ge0\,.
\]
The result follows from an integration on $t\in\R^+$ of~\eqref{cdc} using the above inequality.

\subsection{Gaussian convolutions of measures through the Ornstein-Uhlenbeck flow}\label{Sec:Chen-Chewi-NilesWeed}

For any $a>1$ and $b>0$, let us define the function
\[
F(a,b):=\frac1a\(\frac a{a-1}+b^\frac1{a-1}\)^{-1}\,.
\]
The following result is a rephrasing of the main result in~\cite{chen2021dimension} using the Ornstein-Uhlenbeck flow.
\begin{lemma}\label{Lem:Chen-Chewi-NilesWeed}
Let $v(t,\cdot)$ be a solution to~\eqref{OUv} with $v(t=0,\cdot)=u\in\mathcal H$. If $u$ satisfies~\eqref{Moment} and~\eqref{gautail} for some $\varepsilon>0$ and $\CGS>0$, then
\be{admissible.lambda}
v(t,\cdot)\in\mathcal P\big(\lambda(t)\big)\quad\mbox{with}\quad\lambda(t):=\varepsilon\,e^{2\,t}\,F\(\varepsilon\(e^{2\,t}-1\),\CGS\)\quad\forall\,t\ge t_\varepsilon:=\frac12\,\log\(1+\frac1{\varepsilon}\)\,.
\ee
\end{lemma}
\begin{proof} As in the proof of Lemma~\ref{lemma5}, if the function $f$ solves~\eqref{heat} with initial datum $|u|^2\,\gamma$, then
\[
\gamma(x)\,|v(t,x)|^2=e^{dt}\,f\(\taus,y\)\quad\mbox{with}\quad \taus=\frac12\(e^{2\,t}-1\)\quad\mbox{and}\quad y=e^t\,x
\]
by~\eqref{ChangeOfVariables}. By a direct application of~\cite[Theorem 2]{chen2021dimension}, we learn that $f(\taus,\cdot)/\sqrt\gamma$ is in $\mathcal P(\Lambda)$ with $\Lambda=\varepsilon\,F(2\,\varepsilon\,\taus,\CGS)$ if $2\taus>1/\varepsilon$, \emph{i.e.}, if $t=t(\taus)\ge t_\varepsilon$. Changing variables, if $\psi(y)=\varphi\(e^{-t}\,y\)$, we have
\begin{align*}
&\irdg{|\nabla_x \varphi(x)|^2\,|v(t,x)|^2}=e^{2\,t}\irdy{ \left|\nabla\psi(y)\right|^2\,f(\taus,y)}\,,\\
&\irdg{|\varphi(x)|^2\,|v(t,x)|^2}=\irdy{\left|\psi(y)\right|^2\,f(\taus,y)}\,,
\end{align*}
for any $\varphi\in\mathrm H^1(\R^d,v^2\,d\gamma)$ with zero average with respect to the measure $v^2\,d\gamma$. Since the function $\psi$ has zero average with respect to the measure $f(\taus,y)\,dy$, the corresponding Poincar\'e inequality amounts to~\eqref{admissible.lambda} written with $\lambda=\lambda(t)=e^{2 t}\,\Lambda$. This concludes the proof. 
\end{proof}

\subsection{Conclusion}\label{ss:conclusion}

We can now complete the proof of Theorem~\ref{Thm:Main} as follows. We recall that $v(t,\cdot)$ solves~\eqref{OUv} with initial datum $v(t=0,\cdot)=u$ such that~\eqref{Moment} and~\eqref{gautail} hold. By Corollary~\ref{Lem:Fathi-Indrei-Ledoux} and Lemma~\ref{Lem:Chen-Chewi-NilesWeed}, we know that
\[
\delta[v(t,\cdot)]\ge\sigma\big(\lambda(t)\big)\,\nrmG{\nabla v(t,\cdot)}2^2\quad\forall\,t\ge t_\varepsilon
\]
with $\lambda(t)$ and $t_\varepsilon$ given by~\eqref{admissible.lambda}. As a consequence of Lemma~\ref{Lem:Backward} applied with $T=t_\varepsilon$, we have
\be{eta}
\delta[u]\ge\eta\,\nrmG{\nabla u}2^2\quad\mbox{with}\quad\eta=\sup_{t>t_\varepsilon}\sigma\big(\lambda(t)\big)\,e^{-2\,t}\,.
\ee
\qed

\noindent With the admissible choice of $t$ satisfying  $\varepsilon\,(e^{2\,t}-1) = 2$, we obtain the simple lower estimate
\[
\eta\ge\frac\varepsilon{2+\varepsilon}\,\sigma\(\frac{2+\varepsilon}{2\,(2+\CGS)}\)\,.
\]

\section{Log-concavity and compactly supported functions}\label{Sec:More}

This section collects some additional results about stability for log-concave measures and compactly supported functions. 

\subsection{Stability results for log-concave measures}

Here is a simple improvement of Proposition~\ref{Prop3} based on Corollary~\ref{Cor:Fathi-Indrei-Ledoux}.
\begin{corollary}\label{cor:cor1}
Assume $\nu:=u^2\,d\gamma$ is a logarithmically concave probability measure with $u\in\mathcal H$, such that~\eqref{Moment} is satisfied, and $\nrmG{x\,u}2^2=\mathsf K$. Then, the stability estimate
\[
\delta[u]\ge\sigma(\lambda)\irdg{|\nabla u|^2}
\]
holds with $1/\lambda=432\,\mathsf K$.
\end{corollary}
\begin{proof} This result is a simple consequence of Lemma~\ref{Lem:Cheeger} and Corollary~\ref{Cor:Fathi-Indrei-Ledoux}.\end{proof}

\subsection{Compactly supported functions}

From Corollary~\ref{Cor:Compact-LeeVazquez}, we obtain an improved~\eqref{LSIg} with a bound which depends on the dimension $d$. Dimensional dependence is a huge topic in functional inequalities and we refer to~\cite{Eskenazis_2024} for further considerations in this direction. As a final remark, let us notice that Theorem~\ref{Thm:Main} provides us with a dimension-free result.
\begin{proposition} If $u\in\mathcal H$ is supported in $B(0,R)$ and satisfies~\eqref{Moment}, then
\[
\delta[u]\ge\frac\alpha{1+R^2}\,\nrmG{\nabla u}2^2
\]
with $\alpha=\big((1+e)\,\log(1+e)-e\big)/e^2\,\approx\,0.292973$.
\end{proposition}
\begin{proof} We may notice that~\eqref{gautail} is satisfied for every $\varepsilon>0$, and $\CGS=\exp(\varepsilon\,R^2)$. The result follows by taking the limit as $\varepsilon\to\infty$ in~\eqref{eta} with $e^{2\,t}=1+R^2$ and \hbox{$\lambda(t)\ge(1+e)^{-1}$}.
\end{proof}

\section*{Acknowledgements} The authors thank Max Fathi and Pierre Cardaliaguet for fruitful discussions and Emanuel Indrei for stimulating interactions. They also thank an anonymous referee for useful comments and suggestions which have led to an improvement of the manuscript. They also want to express their gratitude to the managing editor, L.~Gross, for his encouragements and questions. G.B.~has been funded by the European Union’s Horizon 2020 research and innovation program under the Marie Sklodowska-Curie grant agreement No 754362. This work has been (partially) supported by the Project Conviviality ANR-23-CE40-0003 of the French National Research Agency\\
\noindent{\scriptsize\copyright\,2024 by the authors. This paper may be reproduced, in its entirety, for non-commercial purposes.}
\bibliographystyle{siam}\small
\bibliography{BDS2024-LSI}

\begin{thebibliography}{10}

\bibitem{ane2000inegalites}
{\sc C.~An{\'e}, S.~Blach{\`e}re, D.~Chafa{\"\i}, P.~Foug{\`e}res, I.~Gentil,
  F.~Malrieu, C.~Roberto, and G.~Scheffer}, {\em Sur les in{\'e}galit{\'e}s de
  {S}obolev logarithmiques}, vol.~10, Soci{\'e}t{\'e} math{\'e}matique de
  France Paris, 2000.

\bibitem{MR2152502}
{\sc A.~Arnold and J.~Dolbeault}, {\em Refined convex {S}obolev inequalities},
  J. Funct. Anal., 225 (2005), pp.~337--351.

\bibitem{MR1842428}
{\sc A.~Arnold, P.~Markowich, G.~Toscani, and A.~Unterreiter}, {\em On convex
  {S}obolev inequalities and the rate of convergence to equilibrium for
  {F}okker-{P}lanck type equations}, Comm. Partial Differential Equations, 26
  (2001), pp.~43--100.

\bibitem{bakry1985diffusions}
{\sc D.~Bakry and M.~{\'E}mery}, {\em Diffusions hypercontractives}, in
  S{\'e}minaire de Probabilit{\'e}s XIX 1983/84, Springer, 1985, pp.~177--206.

\bibitem{MR3155209}
{\sc D.~Bakry, I.~Gentil, and M.~Ledoux}, {\em Analysis and geometry of
  {M}arkov diffusion operators}, vol.~348 of Grundlehren der Mathematischen
  Wissenschaften [Fundamental Principles of Mathematical Sciences], Springer,
  Cham, 2014.

\bibitem{MR3630285}
{\sc M.~Barchiesi, A.~Brancolini, and V.~Julin}, {\em Sharp dimension free
  quantitative estimates for the {G}aussian isoperimetric inequality}, Ann.
  Probab., 45 (2017), pp.~668--697.

\bibitem{Bardet_2018}
{\sc J.-B. Bardet, N.~Gozlan, F.~Malrieu, and P.-A. Zitt}, {\em Functional
  inequalities for {G}aussian convolutions of compactly supported measures:
  Explicit bounds and dimension dependence}, Bernoulli, 24 (2018).

\bibitem{MR1164616}
{\sc W.~Beckner}, {\em {S}obolev inequalities, the {P}oisson semigroup, and
  analysis on the sphere {$\mathbb S^n$}}, Proc. Nat. Acad. Sci. U.S.A., 89
  (1992), pp.~4816--4819.

\bibitem{MR1124290}
{\sc G.~Bianchi and H.~Egnell}, {\em A note on the {S}obolev inequality}, J.
  Funct. Anal., 100 (1991), pp.~18--24.

\bibitem{bobkov1997poincare}
{\sc S.~Bobkov and M.~Ledoux}, {\em Poincar{\'e}'s inequalities and
  {T}alagrand's concentration phenomenon for the exponential distribution},
  Probab. Theory and Related Fields, 107 (1997), pp.~383--400.

\bibitem{MR1742893}
{\sc S.~G. Bobkov}, {\em Isoperimetric and analytic inequalities for
  log-concave probability measures}, Ann. Probab., 27 (1999), pp.~1903--1921.

\bibitem{MR3269872}
{\sc S.~G. Bobkov, N.~Gozlan, C.~Roberto, and P.-M. Samson}, {\em Bounds on the
  deficit in the logarithmic {S}obolev inequality}, J. Funct. Anal., 267
  (2014), pp.~4110--4138.

\bibitem{bolley2018dimensional}
{\sc F.~Bolley, I.~Gentil, and A.~Guillin}, {\em Dimensional improvements of
  the logarithmic {S}obolev, {T}alagrand and {B}rascamp--{L}ieb inequalities},
  Ann. Probab., 46 (2018), pp.~261--301.

\bibitem{bonforte2020stability}
{\sc M.~Bonforte, J.~Dolbeault, B.~Nazaret, and N.~Simonov}, {\em Stability in
  {G}agliardo-{N}irenberg-{S}obolev inequalities: Flows, regularity and the
  entropy method}, Preprint arXiv:
  \href{http://arxiv.org/abs/2007.03674}{2007.03674} and
  \href{https://hal.archives-ouvertes.fr/hal-02887010}{hal-02887010}, to appear
  in \emph{Memoirs of the AMS},  (2024).

\bibitem{zbMATH03522267}
{\sc H.~J. Brascamp and E.~H. Lieb}, {\em On extensions of the
  {Brunn}-{Minkowski} and {Prekopa}-{Leindler} theorems, including inequalities
  for log concave functions, and with an application to the diffusion
  equation}, J. Funct. Anal., 22 (1976), pp.~366--389.

\bibitem{https://doi.org/10.48550/arxiv.2211.13180}
{\sc G.~Brigati, J.~Dolbeault, and N.~Simonov}, {\em Logarithmic {S}obolev and
  interpolation inequalities on the sphere: Constructive stability results},
  Annales de l'Institut Henri Poincar{\'e} C, Analyse non lin{\'e}aire,
  (2023), pp.~1--33.

\bibitem{https://doi.org/10.48550/arxiv.2302.03926}
\leavevmode\vrule height 2pt depth -1.6pt width 23pt, {\em On {G}aussian
  interpolation inequalities}, C.~R. Math. Acad. Sci. Paris, 362 (2024),
  pp.~21--44.

\bibitem{MR1132315}
{\sc E.~A. Carlen}, {\em Superadditivity of {F}isher's information and
  logarithmic {S}obolev inequalities}, J. Funct. Anal., 101 (1991),
  pp.~194--211.

\bibitem{cattiaux2021functional}
{\sc P.~Cattiaux and A.~Guillin}, {\em Functional inequalities for perturbed
  measures with applications to log-concave measures and to some {B}ayesian
  problems}, Bernoulli, 28 (2022).

\bibitem{MR0402831}
{\sc J.~Cheeger}, {\em A lower bound for the smallest eigenvalue of the
  {L}aplacian}, in Problems in analysis ({S}ympos. in honor of {S}alomon
  {B}ochner, {P}rinceton {U}niv., {P}rinceton, {N}.{J}., 1969), Princeton Univ.
  Press, Princeton, N.J., 1970, pp.~195--199.

\bibitem{chen2021dimension}
{\sc H.-B. Chen, S.~Chewi, and J.~Niles-Weed}, {\em Dimension-free
  log-{S}obolev inequalities for mixture distributions}, J. Funct. Anal., 281
  (2021), p.~109236.

\bibitem{chen2021almost}
{\sc Y.~Chen}, {\em An almost constant lower bound of the isoperimetric
  coefficient in the {KLS} conjecture}, Geom. Funct. Anal., 31 (2021),
  pp.~34--61.

\bibitem{courtade2018quantitative}
{\sc T.~A. Courtade, M.~Fathi, and A.~Pananjady}, {\em Quantitative stability
  of the entropy power inequality}, {IEEE Trans. Inf. Theory}, 64 (2018),
  pp.~5691--5703.

\bibitem{MR1940370}
{\sc M.~Del~Pino and J.~Dolbeault}, {\em Best constants for
  {G}agliardo-{N}irenberg inequalities and applications to nonlinear
  diffusions}, J. Math. Pures Appl. (9), 81 (2002), pp.~847--875.

\bibitem{Demange-PhD}
{\sc J.~Demange}, {\em Des {\'e}quations {\`a} diffusion rapide aux
  in{\'e}galit{\'e}s de {S}obolev sur les mod{\`e}les de la g{\'e}om{\'e}trie},
  PhD thesis, Universit{\'e} Paul Sabatier Toulouse 3, 2005.

\bibitem{https://doi.org/10.48550/arxiv.2209.08651}
{\sc J.~Dolbeault, M.~J. Esteban, A.~Figalli, R.~L. Frank, and M.~Loss}, {\em
  Sharp stability for {S}obolev and log-{S}obolev inequalities, with optimal
  dimensional dependence}, Preprint arXiv:
  \href{http://arxiv.org/abs/2209.08651}{2209.08651} and
  \href{https://hal.archives-ouvertes.fr/hal-03780031}{hal-03780031},  (2023).

\bibitem{MR2435196}
{\sc J.~Dolbeault, B.~Nazaret, and G.~Savar\'{e}}, {\em On the {B}akry-{E}mery
  criterion for linear diffusions and weighted porous media equations}, Commun.
  Math. Sci., 6 (2008), pp.~477--494.

\bibitem{MR3103175}
{\sc J.~Dolbeault and G.~Toscani}, {\em Improved interpolation inequalities,
  relative entropy and fast diffusion equations}, Ann. Inst. H. Poincar\'e
  Anal. Non Lin\'eaire, 30 (2013), pp.~917--934.

\bibitem{Dolbeault_2015}
\leavevmode\vrule height 2pt depth -1.6pt width 23pt, {\em Stability results
  for logarithmic {S}obolev and {G}agliardo--{N}irenberg inequalities}, Int.
  Math. Res. Not. IMRN, 2016 (2016), pp.~473--498.

\bibitem{MR4116725}
{\sc R.~Eldan, J.~Lehec, and Y.~Shenfeld}, {\em Stability of the logarithmic
  {S}obolev inequality via the {F\"o}llmer process}, Ann. Inst. Henri
  Poincar\'{e} Probab. Stat., 56 (2020), pp.~2253--2269.

\bibitem{Eskenazis_2024}
{\sc A.~Eskenazis and Y.~Shenfeld}, {\em Intrinsic dimensional functional
  inequalities on model spaces}, J. Funct. Anal., 286 (2024), p.~110338.

\bibitem{MR2597943}
{\sc L.~C. Evans}, {\em Partial differential equations}, vol.~19 of Graduate
  Studies in Mathematics, American Mathematical Society, Providence, RI,
  second~ed., 2010.

\bibitem{Fathi_2016}
{\sc M.~Fathi, E.~Indrei, and M.~Ledoux}, {\em Quantitative logarithmic
  {S}obolev inequalities and stability estimates}, Discrete Contin. Dynam.
  Systems, 36 (2016), pp.~6835--6853.

\bibitem{Federbush}
{\sc P.~Federbush}, {\em Partially alternate derivation of a result of
  {N}elson}, J. Mathematical Phys., 10 (1969), pp.~50--52.

\bibitem{MR3666798}
{\sc F.~Feo, E.~Indrei, M.~R. Posteraro, and C.~Roberto}, {\em Some remarks on
  the stability of the log-{S}obolev inequality for the {G}aussian measure},
  Potential Anal., 47 (2017), pp.~37--52.

\bibitem{Gross75}
{\sc L.~Gross}, {\em Logarithmic {S}obolev inequalities}, Amer. J. Math., 97
  (1975), pp.~1061--1083.

\bibitem{Guionnet2002}
{\sc A.~Guionnet and B.~Zegarlinski}, {\em Lectures on logarithmic {S}obolev
  inequalities}, S{\'e}minaire de probabilit{\'e}s de Strasbourg, 36 (2002),
  pp.~1--134.

\bibitem{Indrei_2023}
{\sc E.~Indrei}, {\em Sharp stability for {LSI}}, Mathematics, 11 (2023),
  p.~2670.

\bibitem{Indrei2024}
{\sc E.~Indrei}, {\em {$W^{1,1}$} stability for the {LSI}}, Preprint arXiv:
  \href{http://arxiv.org/abs/2007.03674}{2007.03674},  (2024).

\bibitem{MR4305006}
{\sc E.~Indrei and D.~Kim}, {\em Deficit estimates for the logarithmic
  {S}obolev inequality}, Differential Integral Equations, 34 (2021),
  pp.~437--466.

\bibitem{MR3271181}
{\sc E.~Indrei and D.~Marcon}, {\em A quantitative log-{S}obolev inequality for
  a two parameter family of functions}, Int. Math. Res. Not. IMRN, 2014 (2014),
  pp.~5563--5580.

\bibitem{MR4455233}
{\sc D.~Kim}, {\em Instability results for the logarithmic {S}obolev inequality
  and its application to related inequalities}, Discrete Contin. Dyn. Syst., 42
  (2022), pp.~4297--4320.

\bibitem{MR3320893}
{\sc M.~Ledoux, I.~Nourdin, and G.~Peccati}, {\em Stein's method, logarithmic
  {S}obolev and transport inequalities}, Geom. Funct. Anal., 25 (2015),
  pp.~256--306.

\bibitem{MR3665794}
\leavevmode\vrule height 2pt depth -1.6pt width 23pt, {\em A {S}tein deficit
  for the logarithmic {S}obolev inequality}, Sci. China Math., 60 (2017),
  pp.~1163--1180.

\bibitem{lee2003geometrical}
{\sc K.-A. Lee and J.~L. V{\'a}zquez}, {\em Geometrical properties of solutions
  of the porous medium equation for large times}, Indiana Univ. Math. J.,
  (2003), pp.~991--1016.

\bibitem{MR1760620}
{\sc F.~Otto and C.~Villani}, {\em Generalization of an inequality by
  {T}alagrand and links with the logarithmic {S}obolev inequality}, J. Funct.
  Anal., 173 (2000), pp.~361--400.

\bibitem{MR2352327}
{\sc G.~Royer}, {\em An initiation to logarithmic {S}obolev inequalities},
  vol.~14 of SMF/AMS Texts and Monographs, American Mathematical Society,
  Providence, RI; Soci\'{e}t\'{e} Math\'{e}matique de France, Paris, 2007.
\newblock Translated from the 1999 French original by Donald Babbitt.

\bibitem{saumard2014log}
{\sc A.~Saumard and J.~A. Wellner}, {\em Log-concavity and strong
  log-concavity: a review}, Stat. Surv., 8 (2014), p.~45.

\bibitem{MR0109101}
{\sc A.~J. Stam}, {\em Some inequalities satisfied by the quantities of
  information of {F}isher and {S}hannon}, Information and Control, 2 (1959),
  pp.~101--112.

\bibitem{MR3034582}
{\sc G.~Toscani}, {\em An information-theoretic proof of {N}ash's inequality},
  Atti Accad. Naz. Lincei Cl. Sci. Fis. Mat. Natur. Rend. Lincei (9) Mat.
  Appl., 24 (2013), pp.~83--93.

\bibitem{MR3339594}
\leavevmode\vrule height 2pt depth -1.6pt width 23pt, {\em A concavity property
  for the reciprocal of {F}isher information and its consequences on {C}osta's
  {EPI}}, Phys. A, 432 (2015), pp.~35--42.

\bibitem{zbMATH01503413}
{\sc C.~Villani}, {\em {A short proof of the ``concavity of entropy power''}},
  {IEEE Trans. Inf. Theory}, 46 (2000), pp.~1695--1696.

\bibitem{Villani2008}
\leavevmode\vrule height 2pt depth -1.6pt width 23pt, {\em Entropy Production
  and Convergence to Equilibrium}, Springer Berlin Heidelberg, Berlin,
  Heidelberg, 2008, pp.~1--70.

\bibitem{zbMATH06598355}
{\sc F.-Y. Wang and J.~Wang}, {\em Functional inequalities for convolution
  probability measures}, Ann. Inst. Henri Poincar{\'e}, Probab. Stat., 52
  (2016), pp.~898--914.

\bibitem{MR479373}
{\sc F.~B. Weissler}, {\em Logarithmic {S}obolev inequalities for the
  heat-diffusion semigroup}, Trans. Amer. Math. Soc., 237 (1978), pp.~255--269.

\bibitem{zbMATH06261774}
{\sc D.~Zimmermann}, {\em Logarithmic {Sobolev} inequalities for mollified
  compactly supported measures}, J. Funct. Anal., 265 (2013), pp.~1064--1083.

\bibitem{zbMATH06591637}
\leavevmode\vrule height 2pt depth -1.6pt width 23pt, {\em Elementary proof of
  logarithmic {Sobolev} inequalities for {Gaussian} convolutions on
  {{$\mathbb{R}$}}}, Ann. Math. Blaise Pascal, 23 (2016), pp.~129--140.

\end{thebibliography}
\end{document}